\newcommand{\seq}[1]{\left<#1\right>}
\newcommand{\norm}[1]{\left\Vert#1\right\Vert}
\newcommand{\mi}{\left\arrowvert}
\newcommand{\md}{\right\arrowvert}
\newtheorem{definition}{Definition}[section]
\newtheorem{theorem}{Theorem}[section]
\newtheorem{lemma}{Lemma}[section]
\newtheorem{corollary}[theorem]{Corollary}
\newtheorem{proposition}{Proposition}
\newtheorem{remark}[theorem]{Remark}
\newcommand{\supp}{\mbox{\rm\,supp\,}}
\newcommand{\btd}{\bigtriangledown}
\newcommand{\btu}{\bigtriangleup}
\newcommand{\dst}{\displaystyle}
\newcommand{\ba}{\begin{array}}
\newcommand{\ea}{\end{array}}
\newcommand{\refe}[1]{(\ref{#1})}
\newcommand{\half}{\mbox{\scriptsize $\frac{1}{2}$} }
\def\cstok#1{\leavevmode\thinspace\hbox{\vrule\vtop{\vbox{\hrule\kern1pt
\hbox{\vphantom{\tt/}\thinspace{\tt#1}\thinspace}}
\kern1pt\hrule}\vrule}\thinspace}
\title{A high order $q$-Difference equation for $q$-Hahn multiple orthogonal polynomials}
\author{Jorge Arves\'u \thanks{Electronic address:
jarvesu@math.uc3m.es} \\
Department of Mathematics, Universidad Carlos III de Madrid,\\ Avenida de la Universidad, 30, 28911, Legan\'es, Spain;\\
Chiara Esposito \thanks{Electronic address:
esposito@math.ku.dk}\\
Department of Mathematics, Copenhagen University,\\ 
Universitetsparken 5, DK-2100 Copenhagen, Denmark\\}
\begin{document}

\maketitle

\begin{abstract}
A high order linear $q$-difference equation with polynomial
coefficients having $q$-Hahn multiple orthogonal polynomials as
eigenfunctions is given. The order of the equation is related to the number of orthogonality conditions that these polynomials satisfy. 
Some limiting situations when $q\to1$ are studied. Indeed, the difference equation
for Hahn multiple orthogonal polynomials given in \cite{Lee} is corrected and obtained as a limiting case.
\end{abstract}

\section{Introduction}

The relevance of the special functions and classical orthogonal polynomials as eigenfunctions of a second order differential equation is a well established fact \cite{nubook}. For instance, many fundamental problems of quantum mechanics as the harmonic oscillator, the solution of the Schr\"{o}dinger, Dirac and Klein-Gordon equations for a Coulomb potential, the motion of a particle in homogeneous electric or magnetic field lead to the study of the eigenfunctions for the generalized equation of hypergeometric type (see \cite{nubook}). Recall that the spherical and cylinder (Bessel) functions  -perhaps the most popular special functions- as well as the classical orthogonal polynomials are particular solutions of this equation (see also equation \refe{class-hyp-eq} below).

Recently, the appearance of new special functions, namely, the multiple orthogonal polynomials also having the property of being eigenfunctions of a differential equation have attracted the interest of several researchers -see \cite{jc-van_assche}, and \cite{Lee} for a discrete case-. In an effort to consider more general situations, the notion of $q$-Hahn multiple orthogonal polynomials was introduced in \cite{arvesu-q-multi-Hahn}. Furthermore, a particular situation in which a third order $q$-difference equation having the aforementioned multiple orthogonal polynomials as eigenfunctions was considered. Here we complete the study initiated in \cite{arvesu-q-multi-Hahn} by obtaining a high order $q$-difference equation having $q$-Hahn multiple orthogonal polynomials as eigenfunctions. This $q$-difference equation gives a relation for the polynomial with a given degree evaluated at non-uniformed distributed points $x(s)$, $x(s+1),\dots,x(s+r+1)$; where $x(s)$ denotes the $q$-exponential lattice \refe{q-expo} and $r$, the number of orthogonality conditions \refe{ortho-condition-multiple-hahn}. In addition, other multiple orthogonal polynomial families like some studied in \cite{arvesu-coussement-vanassche,artikel}
as well as their corresponding differential and difference equations (see \cite{jc-van_assche,Lee}) can be obtained as a limiting case. 

The contents of this paper are as follows. In Section \ref{pre}, some well-known results on classical and multiple orthogonality are summarized. In Section \ref{main-result}
we give an explicit expression for the multiple orthogonal polynomials studied here as well as a high order $q$-difference equation that these polynomials satisfy. Indeed, the notion of lowering and raising operators as well as the auxiliary lemma \ref{Lee's-lemma} play an essential role in the accomplishment of the $q$-difference equation (theorem \ref{thm2}). As corollary we get some previous results \cite{arvesu-q-multi-Hahn,Lee}. Lastly, Section \ref{limiting} comprises some limiting cases known in the literature as multiple Jacobi and Hahn polynomials, respectively \cite{arvesu-coussement-vanassche,jc-van_assche,artikel}. We also show how these families of polynomials can be obtained from the $q$-Hahn multiple orthogonal polynomials based on the Rodrigues-type formula.

\section{Preliminary notions\label{pre}}
One of the most remarkable features of the classical orthogonal polynomials on the real line, namely, Jacobi, Laguerre, and Hermite polynomials is that they 
are eigenfunctions of the second order differential operator \cite{abraste, Chihara}
\begin{align}
\dst \left(a_{2}(x)\dfrac{d^{2}}{dx^{2}}+a_{1}(x)\dfrac{d}{dx}+\lambda\mathcal{I}\right)
y(x)=0,
\label{class-hyp-eq}
\end{align}
where $\deg a_{2}(x)\leq
2$, $\deg a_{1}(x)=1$, $\lambda$ is a constant independent on $x$ and $\mathcal{I}$ denotes the identity operator. The above equation is known as hy\-per\-geo\-me\-tric equation. An exhaustive study of their solutions in term of the polynomial coefficients $a_2(x)$ and $a_1(x)$ for each classical family can be found in \cite{nubook}.

The notion of orthogonality for these classical families is assumed to be with
respect to a Borel measure $\mu$ on the real line $\mathbb{R}$ (with infinitely many points of increase) supported on a subset
$\Omega\subset\mathbb{R}$, where $\Omega=(a,b)$, with $|a|<\infty$, is the smallest interval on the real line that contains
$\supp \mu$. This orthogonality concept can be briefly described as follows: If $\int_{\Omega}p(x)d\mu(x)$ converges for every polynomial
$p$, then one can define an inner product
$$
\dst\seq{p,q}=\int_{\Omega}p(t)q(t)d\mu(t),
$$
where $p,q$ are polynomials. For such an inner product, which we will say to
be standard, a sequence of polynomials $(p_{n})_{n\geq0}$ is said to be
orthogonal with respect to the above inner product if
\begin{enumerate}
\item[(i)] $\deg p_{n}(x)=n$,
\item[(ii)] $\seq{p_{n},p_{m}}=\delta_{n,m}\norm{p_n}^{2}$, $m,n\in\mathbb{N}$, and $\delta_{m,n}$ denotes the Kronecker delta function.
\end{enumerate}

A difference analog of the equation \refe{class-hyp-eq} on the lattice $x(s)$, $\{x(s)\mapsto\mathbb{R}^{+}:\,a\leq s\leq
 b-1\}$, is the hypergeometric-type difference equation \cite{Nikiforov}
\begin{equation}
\ba{c} \displaystyle
\left(\sigma(s) \frac{\btu}{\btu x(s-\half)}  \frac{\btd }{\btd x(s)}
+ \tau(s) \frac{\btu }{\btu x(s)} + \lambda \mathcal{I}\right)y(s) =0, \\
y(s)=y(x(s)),\quad \sigma(s)=a_2(x(s)) - \half a_1(x(s)) \btu x(s-\half),
 \quad
\tau(s)=a_1(x(s)),
 \ea
\label{eqdif}
\end{equation}
being $\bigtriangledown y(s)=\bigtriangleup y(s-1)$. Here $\btu$ and $\btd$ are the forward and backward difference operators, respectively. Observe that $\bigtriangleup f(s)=f(s+1)-f(s)$. 

Analogously, the classical orthogonal polynomials of a discrete variable can be obtained as the corresponding eigenfunctions of \refe{eqdif}. Indeed,
the polynomial family that verifies \refe{eqdif} can be orthogonalized by constructing a Sturm-Liouville problem with orthogonalizing weight $\omega(s)$ as the solution of a Pearson-type difference equation (see \cite[pp. 70-72]{Nikiforov}). Indeed, 
the following orthogonality property 
\begin{equation}
\ba{c} \displaystyle \sum_{s = a }^{b-1} P_n(x(s))
P_m(x(s))\omega(s) \bigtriangleup x(s-\half) =
\delta_{n,m}||P_n||^2, \ea \label{norm}
\end{equation}
yields under the additional (boundary) conditions $\dst\left.\sigma(s) \omega(s) x^{k}(s-\half) \right|_{s=a,b} = 0$,
$ k= 0,1,\dots$ From now on we will
denote any polynomial $P_n(x(s))$ simply as $P_n(s)$.

In \cite{arvesu} was studied the $q$-Hahn orthogonal polynomials $P_n^{\alpha,\beta}(s)$, $n=0,1,\dots$, with parameters $\alpha,\beta>-1$, on the lattice 
\begin{equation}
x(s)=\frac{q^{s}-1}{q-1},\quad q\not=1.
\label{q-expo}
\end{equation}
In particular, the orthogonalizing weight and the coefficients of the second order difference
equation \refe{eqdif} are as follows:
\begin{equation}
\omega(s)= \begin{cases}\dst
v_{q}^{\alpha,\beta,N}(s)=\frac{\tilde{\Gamma}_q(s+\beta+1)
 \tilde{\Gamma}_q(N+\alpha-s+1)}{q^{-\frac{\alpha+\beta}{2}s}\tilde{\Gamma}_q(s+1)\tilde{\Gamma}_q(N+1-s)} & s=0,1,\dots,N\in\mathbb{N},\\
0,&\quad \mbox{otherwise},
\end{cases}\label{v-function}
\end{equation}
where $\mathbb{N}$ denotes the set of natural numbers, $\tilde{\Gamma}(s)=q^{-\frac{(s-1)(s-2)}{4}}f(s;q)$ if $0<q<1$, or 
$\tilde{\Gamma}(s)=f(s;q^{-1})$ if $q>1$, being $f(s;q)=(1-q)^{1-s}
\frac{\prod_{k\geq0} (1-q^{k+1})  }
{\prod_{k\geq0} (1-q^{s+k})}$, and
\begin{align}
\dst\sigma(s)= - q^{-\frac{N+\alpha}{2}}x(s)^{2}+q^{-\frac{1}{2}}
[N+\alpha]_{q}x(s),\quad\mbox{where}\quad
[x]_q
=\frac{q^{\frac{x}2} -
q^{-\frac{x}2}} {q^{\frac{1}2} -
q^{-\frac{1}2}},\nonumber\\
\dst\tau(s)= -q^{\frac{\beta+2-N}{2}}[\alpha+\beta+2]_{q}x(s) +
q^{\alpha+\beta+1}[\beta+1]_{q}[N-1]_{q},\label{coef-sigma-tau}\\
\dst \lambda_n = q^{\frac{\beta+2-N}{2}} [n]_q [n+\alpha+\beta+1]_{q}.\nonumber
\end{align}

For the classical orthogonal polynomials there is a characterization due to
Sonin (and independently obtained by W. Hahn) in the sense that they are the
families of orthogonal polynomials such that the sequence of their first
derivatives constitutes again a sequence of orthogonal polynomials with a shifted set of parameters. For classical orthogonal polynomials of a discrete variable there is a similar characterization. When one takes the forward difference of the classical orthogonal polynomials of a discrete variable, one can show that these polynomials are again orthogonal polynomials of the same family, but with a different set of parameters. As such, the $\frac{\btu}{\btu x(s-\half)}$ operator acts as a {\it lowering operator}
on these families of polynomials. As an example for the $q$-Hahn polynomials
\begin{equation}
\frac{\btu}{\btu x(s-\half)} P_n^{\alpha,\beta,N}(s) =  q^{-\frac{n}{2}}[n]_{q}P_{n-1}^{\alpha+1,\beta+1,N-1}(s).
\label{lowering}
\end{equation}
Also for the classical orthogonal polynomials of a discrete variable we have
{\it raising operators} which can also be found using the orthogonality property \refe{norm} and summation 
by parts. Hence,
\begin{equation}
\begin{array}{c} \dst\left(
\frac{1}{v_{q}^{\alpha-1,\beta-1,N+1}(s)}\frac{ \btd }{\btd x
(s)}
v_{q}^{\alpha,\beta,N}(s)\right)P^{\alpha,\beta,N}_{n}(s)\\
\dst=-\dfrac{
\left[n+\alpha+\beta\right]_{q}}{q^{\frac{N+n+\beta}{2}}}
P^{\alpha-1,\beta-1,N+1}_{n+1}(s).
\end{array}
\label{raising1}
\end{equation}
By combining the lowering and raising operators \refe{lowering}-\refe{raising1} one can obtain the equation \refe{eqdif} for the $q$-Hahn orthogonal polynomials with the above polynomial coefficients \refe{coef-sigma-tau}. This fact is a general feature of the classical orthogonal polynomials of a discrete variable, i.e., the corresponding combination of lowering a raising operators leads to the second order difference equation
\refe{eqdif}.

In section \ref{main-result} we will generalize the above procedure for a non-standard orthogonality. Indeed, a new situation appears, namely, a third order $q$-difference equation having $q$-orthogonal polynomials as eigenfunctions. This situation keeps certain similarity with the previous accomplishment of the hypergeometric equation via raising and lowering operators; however two orthogonality conditions must be considered instead (see \cite{arvesu-q-multi-Hahn}). Hence, we need to deal with the
concept of multiple orthogonal polynomial. This notion appears naturally in simultaneous Pad\'e
approximation -which is often known as Hermite-Pad\'e approximation- in order
to get simultaneous rational approximants to a vector function. As we will see below they can be interpreted as an extension of ordinary orthogonal polynomials.

Let $\mu_{1},\mu_{2},\dots,\mu_{r}$ $(r\geq2)$ be
Borel measures on $\mathbb{R}$ with infinitely many points of increase such that $\supp\mu_{i}
\subset{{\Omega}_{i}}$, $i=1,\dots,r$, where ${\Omega}_{i}=(a_{i},b_{i})$, with $|a_{i}|<\infty$, is the smallest interval on the real line that contains
$\supp\mu_i$. The Cauchy transforms of the corresponding measures 
$$
\dst \hat{\mu}_{i}(z)=\int_{{\Omega}_{i}}\frac{d\mu_{i}(x)}{z-x},
\quad z\notin\Omega_{i}\quad i=1,\dots,r,
$$
also known as Markov (or Stieltjes) functions, can be simultaneously approximated by rational functions with
prescribed order near infinity. For studying this problem \cite{Nikishin} one needs to introduce a multi-index $\vec{n}=(n_{1},
n_{2},\dots,n_{r})$ of nonnegative integers, and finds a polynomial $P_{\vec{n}}(z)\not\equiv0$ of
degree at most $|\vec{n}|=n_1+\cdots+n_r$, such that the expressions
\begin{equation*}
\ba{c}
\dst P_{\vec{n}}(z)\hat{\mu}_{i}(z)-Q_{\vec{n},i}(z)=\frac{\zeta_{i}}{z^{n_{i}+1}}+\cdots=\mathcal{O}(z^{-n_{i}-1}),\quad
i=1,\dots,r,
\ea
\label{f1}
\end{equation*}
hold. Observe that the polynomial $P_{\vec{n}}(z)$ is the common denominator
of the simultaneous rational approximants $Q_{\vec{n},i}(z)/P_{\vec{n}}(z)$, to the Markov (Stieltjes) functions 
$\hat{\mu}_{i}(z)$, $i=1,2,\dots,r$.

Indeed, $P_{\vec{n}}(z)$ is a type II
multiple orthogonal polynomial of degree
$\leq|\vec{n}|$ defined by the following orthogonality conditions (see \cite{arvesu-coussement-vanassche,Nikishin,artikel})
\begin{equation}
\int_{\Omega_i}P_{\vec{n}}(x)x^k\ d\mu_i(x) =  0,\quad k=0,1,\ldots,n_i-1,\quad i=1,\dots,r.
\label{stelseltype2}
\end{equation}
These conditions \refe{stelseltype2}
give a linear system of $|\vec{n}|$ homogeneous equations for the
$|\vec{n}|+1$ unknown coefficients of $P_{\vec{n}}(z)$.  If the
multi-index $\vec{n}$ is {\em normal} \cite{Nikishin} the solution
is a unique polynomial $P_{\vec{n}}(z)$ (up to a multiplicative
factor) of degree exactly $|\vec{n}|$. In this situation
throughout the paper we consider always monic multiple orthogonal
polynomials.

If the measures in \refe{stelseltype2} are positive discrete
measures on $\mathbb{R}$, i.e.,
$$
\mu_i=\sum_{k=0}^{N_i}\omega_{i,k}\delta_{x_{i,k}},\qquad
\omega_{i,k}>0,\ x_{i,k}\in\mathbb{R},\ \ N_i\in\mathbb{N}\cup
\{+\infty\},\ \  i=1,\ldots,r,
$$
where $x_{i_{1},k}\not=x_{i_{2},k}$, $k=0,\ldots,N_i$, whenever
$i_1\not=i_2$, the corresponding polynomial solution is then a
{\em discrete multiple orthogonal polynomial} $P_{\vec{n}}(z)$ \cite{arvesu-coussement-vanassche}.
Here we have that $\supp\mu_i$ is the closure of
$\{x_{i,k}\}_{k=0}^{N_i}$ and that $\Omega_i$ is the smallest
closed interval on $\mathbb{R}$ that contains
$\{x_{i,k}\}_{k=0}^{N_i}$.  If the above system of measures forms
an {\em AT system} then every multi-index is
normal (see \cite{Nikishin} for a detailed explanation on the concept). 

In \cite{arvesu-coussement-vanassche} for several AT systems of
measures was studied the corresponding discrete multiple
orthogonal polynomials of type II on the linear lattice $x(s)=s$
(those of Charlier, Kravchuk, Meixner of first and second kind,
and Hahn). Even more, it was obtained rising operators and then
the Rodrigues-type formula.

Here we will deal with an AT system formed by different discrete measures supported on the same interval. This situation was analyzed in \cite{arvesu-q-multi-Hahn} for $\vec{n}=(n_1,\ldots,n_r)\in\mathbb{N}^{r}$ and
the set of parameters $N$, $\alpha_{0}$ and $\vec{\alpha}=(\alpha_1,\ldots,\alpha_r)$. Indeed, the orthogonality conditions \refe{stelseltype2} were considered with respect to the following positive discrete
measures on $\mathbb{R}$
\begin{equation}
\mu_{i}=\sum_{s=0}^{N}v^{\alpha_{i},\alpha_{0},N}_{q}(k)\bigtriangleup
x(k-\half)\delta(k-s),\quad i=1,\dots,r,
\label{q-Hahn-measures}
\end{equation}
where $v^{\alpha_{i},\alpha_{0},N}_{q}(k)$ is defined in \refe{v-function} and 
$\alpha_{0},\alpha_{i}>-1$, $\alpha_i - \alpha_j \notin
\{0,1,\ldots,N-1\}$ when $i \neq j$.

\begin{definition}\label{def-1} A polynomial $P_{\vec{n}}^{\vec{\alpha},\alpha_{0},N}(s)$ that verifies the orthogonality conditions \refe{stelseltype2} with respect to the measures \refe{q-Hahn-measures} is said to be the type-II $q$-Hahn multiple orthogonal polynomial of a multi-index
$\vec{n}\in\mathbb{N}^{r}$, i.e., the conditions
\begin{equation}\begin{array}{c}
\dst\sum_{s=0}^{N}P_{\vec{n}}^{\vec{\alpha},\alpha_{0},N}(s)(s)_{q}^{[k]}v^{\alpha_{i},\alpha_{0},N}_{q}(s)
\bigtriangledown x(s+\half)=0,\quad k\leq n_i-1,\quad
i=1,\dots,r,
\end{array}\label{ortho-condition-multiple-hahn}
\end{equation}
hold, where the symbol $(s)_{q}^{[k]}$ denotes the following polynomial of degree at most $k$ in the variable $x(s)$
\begin{equation*}
(s)_{q}^{[k]}=\prod_{j=0}^{k-1}\frac{q^{s-j}-1}{q-1}=x(s)x(s-1)\cdots
x(s-k+1).
\label{canonical-base}
\end{equation*}
\end{definition}

Notice that $(s)_{q}^{[k]}$ is a polynomial of degree $k$ in $x(s)$, and the orthogonality conditions \refe{stelseltype2} have been written more conveniently as \refe{ortho-condition-multiple-hahn}. When $r=1$ definition \ref{def-1} gives the standard orthogonality \refe{norm}. In addition,
$$
    \{1,\left(s\right)_q^{[1]},\ldots ,\left(s\right)_q^{[k]}\},
$$
constitutes a basis of the linear space of polynomials of degree at most $k$ in $x(s)$. 
Indeed, we consider this basis as the canonical one. When $q$ goes to $1$, $(s)_{q}^{[k]}$ converges to
$(-1)^{k}(-s)_k$, where $(s)_k$ denotes the usual Pochhammer symbol
$(s)_k=s(s+1)\cdots(s+k-1)$, $(s)_{0}=1$.

An important feature for $P_{\vec{n}}^{\vec{\alpha},\alpha_{0},N}(s)$ is the existence of raising operators \cite{arvesu-q-multi-Hahn}
\begin{align}
D^{\alpha_{i},\alpha_{0},N}\,P^{\vec{\alpha},\alpha_{0},N}_{\vec{n}}(s)
&\dst=-q^{-\frac{N+|\vec{n}|+\alpha_0}{2}}
\left[|\vec{n}|+\alpha_{i}+\alpha_{0}\right]_{q}
P^{\vec{\alpha}-\vec{e}_{i},\alpha_{0}-1,N+1}_{\vec{n}+\vec{e}_{i}}(s),
\label{raising}\\
 D^{\alpha_{i},\alpha_{0},N}&\stackrel{\mbox{\tiny
def}}{=}\left(\frac{1}{v_{q}^{\alpha_{i}-1,\alpha_{0}-1,N+1}(s)}\nabla
v_{q}^{\alpha_{i},\alpha_{0},N}(s)\right),\nonumber
\end{align}
where the multi-index $\vec{e}_i$ denotes the standard $r$ dimensional
unit vector with the $i$th entry equals $1$ and $0$ otherwise, and $\dst\nabla\stackrel{\mbox{\tiny def}}{=}\frac{\btd}{\btd x(s)}$. Indeed,
\begin{align}
D^{\alpha_i,\alpha_0,N}f(s)=A^{\alpha_0,\alpha_i,N}(s;q) \btd f(s)+B^{\alpha_0,\alpha_i,N}(s;q)f(s),
\label{raising-a-b}
\end{align}
where
\begin{align}
A^{\alpha_0,\alpha_i,N}(s;q)&={q^{-\frac{(\alpha_i+\alpha_0)}{2}}}[s]_q[N+\alpha_0-s+1]_q\label{raising-a}\\
B^{\alpha_0,\alpha_i,N}(s;q)&=\left([s+\alpha_{i}]_{q}[N-s+1]_{q}
-\frac{[s]_{q}[N+\alpha_{0}-s+1]_{q}}{q^{\frac{\alpha_{i}+\alpha_{0}}{2}}}\right)\nonumber\\
&=\left([N+1]_q [\alpha_i]_q-x(s)[\alpha_i+\alpha_0]_qq^{-\frac{(\alpha_{0}+N)}{2}}\right).
\label{raising-f}
\end{align}

\section{High order $q$-difference equation \label{main-result}}

The next theorem \ref{thm2} extends the result of \cite{arvesu-q-multi-Hahn} concerning the $q$-difference equation. We will combine the lowering and raising operators to get a
$(r+1)$-order $q$-difference equation. As a corollary, an explicit difference equation for the Hahn multiple orthogonal polynomials 
is given (see \cite{Lee}).

An explicit expression for the type-II monic $q$-Hahn multiple orthogonal polynomials can be obtained as a direct
consequence of a summation by parts and formula \refe{raising}. 
\begin{proposition}\label{q-Hahn-Rodrigues}
The following finite-difference analog of the Rodrigues formula
\begin{equation}
  \begin{array}{c} \dst P^{\vec{\alpha}, 
  \alpha_{0},N}_{\vec{n}}(s)=\frac{\dst(-1)^{\left|\vec{n}\right|}
  q^{\frac{(N+\alpha_{0})|\vec{n}|+ \prod_{i=1}^{r}n_i}{2}+\sum_{i=1}^r\binom{n_{i}}{2}}}
{ \prod_{k=1}^{r}(|\vec{n}|+\alpha_{0}+\alpha_{k}+1|q)_{n_k}
  }
  \dst\frac{q^{-\frac{\alpha_{0}}{2}s}
  \tilde{\Gamma}_{q}(s+1)\tilde{\Gamma}_{q}(N-s+1)}{\tilde{\Gamma}_{q}(\alpha_{0}+N-s+1)}\\
\dst
  \left(\prod_{i=1}^{r}\frac{q^{-\frac{\alpha_{i}}{2}s}}{\tilde{\Gamma}_{q}(\alpha_{i}+s+1)}\nabla^{n_i}
  \frac{\tilde{\Gamma}_{q}(\alpha_{i}+n_{i}+s+1)}{q^{-\frac{\alpha_{i} 
  +n_{i}}{2}s}}
  \right)
  \frac{q^{\frac{\alpha_{0}+|\vec{n}|}{2}s}\tilde{\Gamma}_{q} 
  (\alpha_{0}+N-s+1)}{\tilde{\Gamma}_{q}(s+1)\tilde{\Gamma}_{q}(N-| 
  \vec{n}|-s+1)},
\label{rodrigues-q-hahn-multiple}
  \end{array}
\end{equation}
holds, where $(a|q)_k=\prod_{m=0}^{k-1}[a+m]_{q}=\tilde{\Gamma}_{q}(a+k)/\tilde{\Gamma}_{q}(a)$ is the $q$-analogue of the Pochhammer symbol. 
\end{proposition}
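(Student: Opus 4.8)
The plan is to establish \refe{rodrigues-q-hahn-multiple} by induction on $|\vec n|$, reading the raising relation \refe{raising} backwards so that each step removes one unit $\vec e_i$ from the multi-index; this iterative use of \refe{raising} (itself a summation-by-parts identity) is the content behind the statement. For the base case $\vec n=\vec 0$ one has $P_{\vec 0}^{\vec\alpha,\alpha_0,N}(s)=1$, the monic polynomial of degree zero, and on the right-hand side every $\nabla^{n_i}$ becomes the identity while the two outer $q$-$\tilde\Gamma$ factors and the empty products cancel in pairs, leaving $1$. This fixes the normalisation at the bottom of the induction.

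For the inductive step I would fix an index $i$ with $n_i\geq 1$, put $\vec m=\vec n-\vec e_i$, and solve \refe{raising} (applied with the shifted parameters $\vec\alpha+\vec e_i,\ \alpha_0+1,\ N-1$, which produce exactly $\vec\alpha,\alpha_0,N$ after one raising) for the polynomial of larger multi-index:
\begin{equation*}
P_{\vec n}^{\vec\alpha,\alpha_0,N}(s)
=\frac{-\,q^{\frac{N+|\vec n|+\alpha_0-1}{2}}}{\left[\,|\vec n|+\alpha_i+\alpha_0+1\,\right]_q}\;
\frac{1}{v_q^{\alpha_i,\alpha_0,N}(s)}\,
\nabla\left(v_q^{\alpha_i+1,\alpha_0+1,N-1}(s)\,P_{\vec m}^{\vec\alpha+\vec e_i,\alpha_0+1,N-1}(s)\right),
\end{equation*}
where I have used $D^{\alpha_i+1,\alpha_0+1,N-1}=\tfrac{1}{v_q^{\alpha_i,\alpha_0,N}}\,\nabla\,v_q^{\alpha_i+1,\alpha_0+1,N-1}$ from the definition in \refe{raising}. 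Substituting the induction hypothesis for $P_{\vec m}^{\vec\alpha+\vec e_i,\alpha_0+1,N-1}(s)$ (whose parameters are the shifted ones and whose length is $|\vec m|=|\vec n|-1$) reduces everything to simplifying one nested difference expression.

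The heart of the argument, and the step I expect to be the main obstacle, is to show that the weight $v_q^{\alpha_i+1,\alpha_0+1,N-1}(s)$ telescopes correctly against the factors carried by the hypothesis. Concretely, when it is multiplied onto the right-hand side of the hypothesis, the block $\frac{q^{-\frac{\alpha_i+1}{2}s}}{\tilde\Gamma_q(\alpha_i+s+2)}\nabla^{m_i}\frac{\tilde\Gamma_q(\alpha_i+m_i+s+2)}{q^{-\frac{\alpha_i+m_i+1}{2}s}}$ attached to index $i$ must collapse so that the outer $\nabla$ coming from $D^{\alpha_i+1,\alpha_0+1,N-1}$ merges with $\nabla^{m_i}$ into $\nabla^{m_i+1}=\nabla^{n_i}$, while the shared factors carrying the $\alpha_0,N$ dependence are converted from $(\alpha_0+1,N-1)$ back to $(\alpha_0,N)$; the final division by $v_q^{\alpha_i,\alpha_0,N}$ then restores the correct outer block $\frac{q^{-\frac{\alpha_i}{2}s}}{\tilde\Gamma_q(\alpha_i+s+1)}$ for index $i$. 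Tracking the $q$-powers shed at each of these regroupings is the delicate bookkeeping that must reproduce the exponent $\frac{(N+\alpha_0)|\vec n|+\prod_i n_i}{2}+\sum_i\binom{n_i}{2}$, while the scalar $[\,|\vec n|+\alpha_i+\alpha_0+1\,]_q^{-1}$ collected here converts the $i$-th $q$-Pochhammer factor $(|\vec n|+\alpha_0+\alpha_i+2\,|q)_{m_i}$ of the hypothesis into the target factor $(|\vec n|+\alpha_0+\alpha_i+1\,|q)_{n_i}$ via $(a|q)_{m_i+1}=[a]_q\,(a+1|q)_{m_i}$, the factors with $k\neq i$ already matching since there $m_k=n_k$. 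Because \refe{raising} relates monic polynomials, iterating it keeps $P_{\vec n}^{\vec\alpha,\alpha_0,N}$ monic throughout, so no separate monicity or normalisation check is needed, and the same construction shows en route that the right-hand side of \refe{rodrigues-q-hahn-multiple} is genuinely a polynomial of degree $|\vec n|$ in $x(s)$.
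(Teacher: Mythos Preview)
Your plan is essentially the paper's own argument. The paper's proof simply rewrites the orthogonality conditions via $(s)_q^{[k]}=\tfrac{q^{(k-2)/2}}{[k+1]_q}\nabla(s+1)_q^{[k+1]}$, obtains the raising relation \refe{raising} by summation by parts, and then says ``Recursively using this raising operator gives the Rodrigues-type formula \refe{rodrigues-q-hahn-multiple}.'' Your induction on $|\vec n|$, peeling off one $\vec e_i$ at a time through \refe{raising} with shifted parameters, is exactly that recursion made explicit, and the telescoping and $q$-power bookkeeping you describe is the content hidden behind the paper's single sentence.

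One small caveat: since the product $\prod_{i=1}^r$ in \refe{rodrigues-q-hahn-multiple} is ordered (each $\nabla^{n_i}$ acts on everything to its right), the outer $\nabla$ coming from $D^{\alpha_i+1,\alpha_0+1,N-1}$ can only merge directly with $\nabla^{m_i}$ when the $i$-th block is outermost. So in the inductive step you should take $i=1$ (and then, once $n_1$ is exhausted, relabel), or else first observe that the raising operators commute --- a fact the paper invokes later in the proof of Theorem~\ref{thm2} --- so that the Rodrigues expression is invariant under permutation of the indices and any choice of $i$ is allowed.
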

Notice that the above Rodrigues-type formula characterizes the type-II $q$-Hahn multiple orthogonal polynomials in terms of a finite-difference property.

\begin{proof} Replacing $(s)_{q}^{[k]}$ in \refe{ortho-condition-multiple-hahn} by the following finite-difference expression
$$
(s)_{q}^{[k]}=\frac{ q^{\frac{(k-2)}{2}} }{[k+1]_{q}}
\nabla (s+1)_{q}^{[k+1]},
$$
the orthogonality conditions can be
written in a more convenient way as follows
\begin{equation}\begin{array}{c} \dst\sum_{s=0}^{N}P^{\vec{\alpha}, 
  \alpha_{0},N}_{\vec{n}}(s)\nabla (s+1)_{q}^{[k+1]}v^{\alpha_{i},\alpha_{0},N}_{q}(s)\bigtriangledown x(s+\half)=0,\\
k=0,1,\dots,n_i-1,\quad
i=1,2,\dots,r.\end{array}\label{eqv-otho-cond-hahn}
\end{equation}
From here, using the summation by parts one gets \refe{raising}. Recursively using this raising operator gives the
Rodrigues-type formula \refe{rodrigues-q-hahn-multiple}.
\end{proof}

In the next theorem we will need the following auxiliary lemma. 
\begin{lemma}\label{Lee's-lemma} Let $A$ be the following $r$-dimensional matrix 
\begin{align*}
A&=\left(\begin{array}{cccc} \frac{1}{[n_1]_{q}} & \frac{1}{[n_1+\alpha_1-\alpha_2]_{q}} & \cdots & \frac{1}{[n_1+\alpha_1-\alpha_r]_{q}}\\
\frac{1}{[n_2+\alpha_2-\alpha_1]_{q}} & \frac{1}{[n_2]_{q}} & \cdots & \frac{1}{[n_2+\alpha_2-\alpha_r]_{q}}\\
\vdots & \vdots & \ddots & \vdots\\
\frac{1}{[n_r+\alpha_r-\alpha_1]_{q}} & \frac{1}{[n_r+\alpha_r-\alpha_2]_{q}} & \cdots & \frac{1}{[n_r]_{q}}
\end{array}
\right)\\
&=(a_{i,j})_{i,j=1}^{r},\quad a_{i,j}=\dfrac{1}{[n_i+\alpha_{i}-\alpha_{j}]_{q}},
\end{align*}
then the determinant of $A$ is
\begin{equation}
\det A=\dfrac{\dst\prod_{k=1}^{r-1}\prod_{l=1}^{r}[\alpha_{k}-\alpha_{l}]_{q}[n_l-n_k+\alpha_{l}-\alpha_{k}]_{q}}
{\dst\prod_{k=1}^{r}\prod_{l=1}^{r}[n_l+\alpha_{l}-\alpha_{k}]_{q}}.
\label{det-A}
\end{equation}
\end{lemma}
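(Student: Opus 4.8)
The plan is to recognise $\det A$ as a $q$-analogue of the classical Cauchy determinant $\det\!\left(1/(P_i-Q_j)\right)$ and to reduce it to that case by an exponential substitution. Write $s=q^{1/2}$, so that $[a]_q=(s^{a}-s^{-a})/(s-s^{-1})$, and set $\xi_i=n_i+\alpha_i$ and $\eta_j=\alpha_j$. Under the standing assumption $\alpha_i-\alpha_j\notin\{0,\dots,N-1\}$ all the brackets $[\xi_i-\eta_j]_q$ are nonzero, and the $(i,j)$ entry is simply $a_{i,j}=1/[\xi_i-\eta_j]_q$, which is precisely the shape of a trigonometric (hyperbolic) Cauchy matrix.

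Next I would linearise the brackets. From the elementary identity $q^{a}-q^{b}=q^{(a+b)/2}(s-s^{-1})[a-b]_q$ one gets $a_{i,j}=(s-s^{-1})\,q^{\xi_i/2}q^{\eta_j/2}/(q^{\xi_i}-q^{\eta_j})$. The scalar $(s-s^{-1})$ is common to all $r^{2}$ entries and hence contributes $(s-s^{-1})^{r}$ to the determinant; the factor $q^{\xi_i/2}$ depends only on the row $i$ and $q^{\eta_j/2}$ only on the column $j$, so both pull out. Writing $P_i=q^{\xi_i}$, $Q_j=q^{\eta_j}$ this gives
\begin{equation*}
\det A=(s-s^{-1})^{r}\Big(\prod_{i=1}^{r}q^{\xi_i/2}\Big)\Big(\prod_{j=1}^{r}q^{\eta_j/2}\Big)\,\det\!\Big(\frac{1}{P_i-Q_j}\Big)_{i,j=1}^{r}.
\end{equation*}

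At this point I would invoke the classical Cauchy determinant $\det\!\left(1/(P_i-Q_j)\right)=\prod_{i<j}(P_i-P_j)(Q_j-Q_i)/\prod_{i,j}(P_i-Q_j)$ (standard, provable in one line by induction after clearing denominators), and rewrite each difference back through $q^{a}-q^{b}=q^{(a+b)/2}(s-s^{-1})[a-b]_q$. The remaining work is pure bookkeeping of the prefactors, which a counting argument settles at once: the exponent of $(s-s^{-1})$ is $r+2\binom{r}{2}-r^{2}=0$, and the total exponent of $q^{1/2}$ is $\tfrac12\big(\sum_i\xi_i+\sum_j\eta_j\big)\big(1+(r-1)-r\big)=0$, so every prefactor cancels. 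One is then left with $\det A=\prod_{i<j}[\xi_i-\xi_j]_q[\eta_j-\eta_i]_q/\prod_{i,j}[\xi_i-\eta_j]_q$. Substituting $\xi_i=n_i+\alpha_i$, $\eta_j=\alpha_j$ turns the denominator into $\prod_{i,j}[n_i+\alpha_i-\alpha_j]_q$, while the oddness $[-a]_q=-[a]_q$ lets me reorder the two factors in each pair so that the numerator becomes $\prod_{1\le k<l\le r}[\alpha_k-\alpha_l]_q[n_l-n_k+\alpha_l-\alpha_k]_q$; this is exactly \refe{det-A}, the numerator product there being read over $k<l$ (otherwise a spurious $k=l$ factor $[0]_q=0$ would appear). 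The only genuine obstacle is this prefactor accounting, which the homogeneity/counting argument above disposes of cleanly; everything else reduces to the Cauchy identity.
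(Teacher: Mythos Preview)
Your argument is correct and takes a genuinely different route from the paper. The paper proceeds by direct row and column manipulation, following the $q=1$ pattern of \cite{Lee}: one verifies the identity
\[
a_{k,i}-a_{k,1}=\tilde{\lambda}_{i,1}\,a_{k,i}\,a_{k,1}\,q^{-\frac{n_k+\alpha_k}{2}}\bigl(q^{\frac{\alpha_i+\alpha_1}{2}}+q^{\alpha_k+n_k}\bigr),
\]
subtracts the first column from the others, then subtracts the first row from the others, peeling off the factors $\prod_k a_{k,1}$, $\prod_j a_{1,j}$ and $\prod_{i\ge 2}[\alpha_i-\alpha_1]_q[n_1-n_i+\alpha_1-\alpha_i]_q$ and reducing to the same determinant in dimension $r-1$; induction then gives \refe{det-A}. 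Your approach instead performs the global substitution $P_i=q^{n_i+\alpha_i}$, $Q_j=q^{\alpha_j}$, recognises the matrix as a rescaled Cauchy matrix $(P_i-Q_j)^{-1}$, and invokes the classical Cauchy determinant in one stroke; the prefactor bookkeeping you carry out ($(s-s^{-1})$-power $r+2\binom{r}{2}-r^2=0$ and vanishing total $q$-exponent) is clean and correct.

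What each buys: the paper's induction is self-contained and mirrors the source \cite{Lee}, so it makes the $q\to 1$ comparison transparent step by step. Your reduction is shorter, explains \emph{why} the formula has Cauchy shape, and would generalise immediately to any symmetric $q$-number satisfying the same factorisation. Your final remark is also apt: the numerator in \refe{det-A} must be read over $k<l$ (equivalently $l\neq k$), since the literal double product $1\le k\le r-1$, $1\le l\le r$ would include $l=k$ and vanish; your derivation yields exactly $\prod_{k<l}[\alpha_k-\alpha_l]_q[n_l-n_k+\alpha_l-\alpha_k]_q$, in agreement.
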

Here for proving \refe{det-A} we will follow the operations indicated in \cite[Lemma 2.8, p. 18]{Lee}.
\begin{proof} Let us proceed by column and row operations on the matrix $A$. Observe that, for $k=1,\dots,r$ and $i=2,\dots,r$ the following relation
\begin{align}
a_{k,i}-a_{k,1}&=\tilde{\lambda}_{i,1}a_{k,i}a_{k,1}q^{-\frac{n_{k}+\alpha_{k}}{2}}\left(q^{\frac{\alpha_{i}+\alpha_{1}}{2}}+q^{\alpha_{k}+n_{k}}\right),\quad 
\tilde{\lambda}_{i,1}=\dfrac{[\alpha_{i}-\alpha_{1}]_{q}}{
q^{\frac{\alpha_{i}}{2}}+q^{\frac{\alpha_{1}}{2}} },
\label{column-op}
\end{align}
yields.

Therefore, based on \refe{column-op} if $A_k$ denotes the $k$th column of $A$ $(k=1,\dots,r)$  one gets
\begin{align*}
\det A&=\det(A_1,A_2-A_1,\dots,A_r-A_1)\\
&=\left(\prod_{k=1}^{r} a_{k,1}\right)\left(\prod_{i=2}^{r}\tilde{\lambda}_{i,1}\right)\left(\begin{array}{cccc} 1& \tilde{a}_{1,2} &\cdots&\tilde{a}_{1,r}\\
1 & \tilde{a}_{2,2}&\cdots&\tilde{a}_{2,r}\\
\vdots & \vdots & \ddots & \vdots\\
1& \tilde{a}_{r,2} &\cdots&\tilde{a}_{r,r}
\end{array}
\right),
\end{align*}
where $\tilde{a}_{k,i}=a_{k,i}q^{-\frac{n_{k}+\alpha_{k}}{2}}\left(q^{\frac{\alpha_{i}+\alpha_{1}}{2}}+q^{\alpha_{k}+n_{k}}\right)$. Now, if one substracts the first row from the other ones, and takes into account that
$$
\tilde{a}_{k,i}-\tilde{a}_{1,i}=a_{k,i}a_{1,i}\mu_{k,1}\left(q^{\frac{\alpha_i}{2}}+q^{\frac{\alpha_1}{2}}\right),
\quad\mu_{k,1}=[n_1-n_k+\alpha_1-\alpha_k]_{q}\quad i,k=2,\dots,r,
$$
then
$$
\det A=\left(\prod_{k=1}^{r} a_{k,1}\right)\left(\prod_{j=2}^{r} a_{1,j}\right)\left(\prod_{i=2}^{r}\lambda_{i,1}\mu_{i,1}\right)\left(\begin{array}{cccc} a_{2,2}& a_{2,3} &\cdots& a_{2,r}\\
a_{3,2} & a_{3,3}&\cdots& a_{3,r}\\
\vdots & \vdots & \ddots & \vdots\\
a_{r,2}& a_{r,3} &\cdots& a_{r,r}
\end{array}
\right),
$$
where $\lambda_{i,1}=[\alpha_{i}-\alpha_{1}]_{q}$.

Finally, repeating the previous column and row operations -but on lower dimensional matrices- the expression \refe{det-A} can be inductively proved.
\end{proof}

\begin{theorem}\label{thm2} The type-II monic $q$-Hahn multiple orthogonal polynomial  $P^{\vec{\alpha},\alpha_{0},N}_{\vec{n}}(s)$ is an eigenfunction of the following
$(r+1)$-order $q$-difference equation
  \begin{equation}\begin{array}{c} \dst
  \left(\prod_{i=1}^{r}D^{\alpha_{i}+1,\alpha_{0}+1,N-1}\right) 
  \frac{\btu}{\btu
  x(s)}P^{\vec{\alpha},\alpha_{0},N}_{\vec{n}}(s)=-q^{- 
  \frac{(N+|\vec{n}|+\alpha_{0}-1)}{2}}\\
  \dst
  \left(\sum_{i=1}^{r}\xi_{i}[|\vec{n}|+\alpha_{0}+\alpha_{i}+1]_{q} D^{\alpha_{i}+1,\alpha_{0}+1,N-1} 
  \right)
  P^{\vec{\alpha},\alpha_{0},N}_{\vec{n}}(s),
\end{array}
  \label{q-diff-eq-multiple-Hahn}
  \end{equation}
where
  \begin{align}
\nonumber \xi_{i}=&\left(\sum_{k=1}^{r}\dfrac{(-1)^{k+l}[n_k+\alpha_k+\alpha_0+1]_{q}\prod_{i=1,i\not=l}^{r}[n_k+\alpha_k-\alpha_i]_{q}}
{\prod_{i=1,i\not=k}^{r-1}[n_i+\alpha_i-n_k-\alpha_k]_{q}\prod_{j=k+1}^{r}[n_k+\alpha_k-n_j-\alpha_j]_{q}}\right)\\
&\dfrac{q^{\frac{|\vec{n}|-1}{2}}\prod_{k=1}^{r-1}\prod_{l=1}^{r}[\alpha_{k}-\alpha_{l}]_{q}[n_l-n_k+\alpha_{l}-\alpha_{k}]_{q}}
{\prod_{k=1}^{r}\prod_{l=1}^{r}[n_l+\alpha_{l}-\alpha_{k}]_{q}},\quad i=1,\dots,r.
\label{xi-constant}
  \end{align}

  \end{theorem}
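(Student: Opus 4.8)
The plan is to derive \refe{q-diff-eq-multiple-Hahn} by composing the single forward‑difference lowering operator $\frac{\btu}{\btu x(s)}$ with the $r$ raising operators $D^{\alpha_i+1,\alpha_0+1,N-1}$, and to use Lemma \ref{Lee's-lemma} to resolve the linear algebra that this produces. First I would apply the lowering operator to $P^{\vec{\alpha},\alpha_0,N}_{\vec{n}}$. Since $\frac{\btu}{\btu x(s)}$ drops the total degree by one and, in analogy with \refe{lowering}, shifts the parameters to $\vec{\alpha}+\vec{1},\alpha_0+1,N-1$, its image is a polynomial of degree $|\vec{n}|-1$ that still satisfies enough of the orthogonality conditions \refe{ortho-condition-multiple-hahn} (with the shifted weights) to lie in the span of the $r$ multiple orthogonal polynomials $P^{\vec{\alpha}+\vec{1},\alpha_0+1,N-1}_{\vec{n}-\vec{e}_j}$, $j=1,\dots,r$. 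Hence I would write
\[\frac{\btu}{\btu x(s)}P^{\vec{\alpha},\alpha_0,N}_{\vec{n}}(s)=\sum_{j=1}^{r}c_j\,P^{\vec{\alpha}+\vec{1},\alpha_0+1,N-1}_{\vec{n}-\vec{e}_j}(s),\]
and fix the coefficients $c_j$ by testing against the canonical basis $(s)_q^{[k]}$ and comparing leading coefficients, exactly as the raising relation \refe{raising} is obtained by summation by parts.

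Next I would reintroduce the raising operators. Applying the product $\prod_{i=1}^{r}D^{\alpha_i+1,\alpha_0+1,N-1}$ to the identity above, and using \refe{raising} for the one parameter‑matched factor $D^{\alpha_j+1,\alpha_0+1,N-1}$ in each summand, reproduces on the left the order‑$(r+1)$ operator of \refe{q-diff-eq-multiple-Hahn} acting on $P^{\vec{\alpha},\alpha_0,N}_{\vec{n}}$. The difficulty is that the remaining factors in the product are then no longer parameter‑matched, since each matched raising already returns $\alpha_0+1\mapsto\alpha_0$ and $N-1\mapsto N$; these factors must therefore be treated as the explicit first‑order difference operators \refe{raising-a-b}--\refe{raising-f} rather than as clean ladder steps. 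To close the computation I would expand the same product in the single operators $D^{\alpha_i+1,\alpha_0+1,N-1}P^{\vec{\alpha},\alpha_0,N}_{\vec{n}}$ occurring on the right of \refe{q-diff-eq-multiple-Hahn}; matching the two expansions yields an $r\times r$ linear system for the coefficients whose matrix is precisely the Cauchy‑type matrix $A$ with entries $a_{i,j}=1/[n_i+\alpha_i-\alpha_j]_q$ of Lemma \ref{Lee's-lemma}. Solving this system by Cramer's rule and inserting the closed determinant \refe{det-A} is what produces the constants $\xi_i$ of \refe{xi-constant}. I expect the evaluation and simplification of these cofactor‑over‑determinant ratios to be the main obstacle, and it is exactly the step for which Lemma \ref{Lee's-lemma} is designed.

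Finally I would collect the scalar factors. The exponent $-\tfrac{N+|\vec{n}|+\alpha_0-1}{2}$ and the brackets $[|\vec{n}|+\alpha_0+\alpha_i+1]_q$ on the right of \refe{q-diff-eq-multiple-Hahn} are precisely the prefactors generated by the matched raising \refe{raising} after the substitution $\vec{\alpha}\mapsto\vec{\alpha}+\vec{1}$, $\alpha_0\mapsto\alpha_0+1$, $N\mapsto N-1$, $\vec{n}\mapsto\vec{n}-\vec{e}_i$, so this bookkeeping of $q$‑powers and $q$‑brackets should combine cleanly with the factor $q^{(|\vec{n}|-1)/2}\det A$ that appears inside $\xi_i$. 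As consistency checks I would specialise to $r=2$, where the identity must collapse to the third‑order $q$‑difference equation of \cite{arvesu-q-multi-Hahn}, and let $q\to1$, where it must reproduce and correct the Hahn case of \cite{Lee}; both limits also serve to pin down the sign and normalisation conventions introduced in the lowering step.
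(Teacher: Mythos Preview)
Your outline follows the paper's broad strategy---a lowering identity for $\Delta P^{\vec{\alpha},\alpha_0,N}_{\vec{n}}$, then application of the product $\prod_i D^{\alpha_i+1,\alpha_0+1,N-1}$, with the resulting linear system solved via Lemma~\ref{Lee's-lemma}---but the lowering decomposition you write is not the one the paper uses, and the difference is not cosmetic. You expand $\Delta P^{\vec{\alpha},\alpha_0,N}_{\vec{n}}$ in the polynomials $P^{\vec{\alpha}+\vec{1},\alpha_0+1,N-1}_{\vec{n}-\vec{e}_j}$, with \emph{all} $\alpha_i$ shifted. The paper instead proves
\[
\Delta P^{\vec{\alpha},\alpha_0,N}_{\vec{n}}(s)=\sum_{l=1}^{r}\xi_l\,P^{\vec{\alpha}+\vec{e}_l,\alpha_0+1,N-1}_{\vec{n}-\vec{e}_l}(s),
\]
with only the $l$-th entry of $\vec{\alpha}$ shifted. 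This choice is precisely what makes the final step work: by \refe{raising}, the single factor $D^{\alpha_l+1,\alpha_0+1,N-1}$ sends $P^{\vec{\alpha}+\vec{e}_l,\alpha_0+1,N-1}_{\vec{n}-\vec{e}_l}$ to a constant multiple of the \emph{original} $P^{\vec{\alpha},\alpha_0,N}_{\vec{n}}$, after which the remaining commuting factors $\prod_{i\neq l}D^{\alpha_i+1,\alpha_0+1,N-1}$ act on that fixed polynomial and yield the right-hand side of \refe{q-diff-eq-multiple-Hahn} directly. With your shift $\vec{\alpha}+\vec{1}$, one matched raising produces $P^{\vec{\alpha}+\vec{1}-\vec{e}_j,\alpha_0,N}_{\vec{n}}$, which is \emph{not} $P^{\vec{\alpha},\alpha_0,N}_{\vec{n}}$; worse, $\alpha_0$ and $N$ have already returned to their original values, so the remaining operators $D^{\alpha_i+1,\alpha_0+1,N-1}$ are now mismatched in $\alpha_0$ and $N$ as well as in $\alpha_i$. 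Your proposed remedy---treat them as explicit first-order operators and ``expand in the single $D^{\alpha_i+1,\alpha_0+1,N-1}P^{\vec{\alpha},\alpha_0,N}_{\vec{n}}$''---does not close: those target expressions are polynomials with parameters $\vec{\alpha}-\vec{e}_i,\alpha_0-1,N+1$, unrelated to the $\vec{\alpha}+\vec{1}-\vec{e}_j$ objects you actually obtain, so no Cauchy-type system in your $c_j$ emerges from that matching.

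The paper also determines the $\xi_l$ differently from ``comparing leading coefficients''. It first derives, by repeated summation by parts, the identity \refe{exp-lemma-1} expressing the inner product of $P^{\vec{\alpha}+\vec{e}_l,\alpha_0+1,N-1}_{\vec{n}-\vec{e}_l}$ against $(s)_q^{[n_k-1]}$ (weight $v_q^{\alpha_k+1,\alpha_0+1,N-1}$) as $\tilde{a}_{k,l}$ times a fixed reference quantity, and the companion identity \refe{lemma-2-equation-3} for $\Delta P^{\vec{\alpha},\alpha_0,N}_{\vec{n}}$. Equating these for $k=1,\dots,r$ gives the system $S\xi=b$ with $S=A\cdot\mathrm{diag}\bigl([|\vec{n}|+\alpha_0+\alpha_l+1]_q\bigr)$, not $A$ itself; Lemma~\ref{Lee's-lemma} then shows $\det S\neq 0$ and Cramer's rule yields \refe{xi-constant}. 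In short, your roadmap has the right ingredients but the wrong lowering basis: replacing $\vec{\alpha}+\vec{1}$ by $\vec{\alpha}+\vec{e}_l$ and extracting the $\xi_l$ through the inner-product relations \refe{lemma-1-final}--\refe{lemma-2-equation-3} is what makes the argument go through.
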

\begin{proof} Taking into account the expressions \refe{raising}-\refe{raising-f}, the $q$-Hahn multiple orthogonal polynomial can be expressed in term of the raising operator as follows
  $$\begin{array}{c}
  P^{\vec{\alpha},\alpha_{0},N}_{\vec{n}}(s)\dst=\frac{-q^{\frac{N+\left|\vec{n}\right|+\alpha_0-1}{2}}}
  {\left[\left|\vec{n}\right|+\alpha_{i}+\alpha_{0}\right]_{q}}\\
  \left(
B^{\alpha_{0}+1,\alpha_{i}+1,N-1}(s;q)
  \mathcal{I}+A^{\alpha_{0}+1,\alpha_{i}+1,N-1}(s;q)\btd
  \right)
P^{\vec{\alpha}+\vec{e}_{i},\alpha_{0}+1,N-1}_{\vec{n}- 
  \vec{e}_{i}}(s),
  \end{array}
  $$
 where $i=1,2,\dots, r$. Hence, for $k\leq r$, one gets the following relation
  \begin{eqnarray}\label{intermediate}
  \nonumber\dst\sum_{s=0}^{N}P^{\vec{\alpha}, 
  \alpha_{0},N}_{\vec{n}}(s)(s)_{q}^{[n_{k}-1]}
  v^{\alpha_{k}+1,\alpha_{0},N}_{q}(s)\btu x(s-\half)\\
\dst=\frac{-q^{\frac{N+\left|\vec{n}\right|+\alpha_0-1}{2}}}
  {\left[\left|\vec{n}\right|+\alpha_{i}+\alpha_{0}\right]_{q}}\sum_{s=0}^{N}(s)_{q}^{[n_{k}-1]}
  v^{\alpha_{k}+1,\alpha_{0},N}_{q}(s)\btu x(s-\half)\\
 \nonumber \left(
B^{\alpha_{0}+1,\alpha_{i}+1,N-1}(s;q)
  \mathcal{I}
+A^{\alpha_{0}+1,\alpha_{i}+1,N-1}(s;q)\btd
  \right)P^{\vec{\alpha}+\vec{e}_{i},\alpha_{0}+1,N-1}_{\vec{n}- 
  \vec{e}_{i}}(s).
  \end{eqnarray} 
Now,  transforming \refe{intermediate} by doing a suitable combination of the orthogonalizing weight $v^{\alpha_{k}+1,\alpha_{0},N}_{q}(s)$ with the terms $A^{\alpha_{0}+1,\alpha_{i}+1,N-1}(s;q)$ and $B^{\alpha_{0}+1,\alpha_{i}+1,N-1}(s;q)$, and summing by parts
  \begin{equation}\label{lemma-1-final}\begin{array}{c}
  \dst\sum_{s=0}^{N}P^{\vec{\alpha}, 
  \alpha_{0},N}_{\vec{n}}(s)(s)^{[n_k-1]}_{q}v^{\alpha_{k} 
  +1,\alpha_{0},N}_{q}
  (s)\btu x(s-\half)=\frac{q^{\theta}[n_{k}+\alpha_{k}-\alpha_{i}]_{q}} 
  {[\left|\vec{n}\right|+\alpha_{i}+\alpha_{0}+1]_{q}}\\
  \dst\times\sum_{s=0}^{N}P^{\vec{\alpha}+\vec{e}_{i}, 
  \alpha_{0}+1,N-1}_{\vec{n}-\vec{e}_{i}}(s)
  (s)^{[n_k-1]}_{q}v^{\alpha_{k}+1,\alpha_{0}+1,N-1}_{q}(s)\btu
  x(s-\half),
  \end{array}
  \end{equation}
  yields, where $\theta=\frac{N+|\vec{n}|+n_{k}+\alpha_{k}+\alpha_{0}}{2}$. Here, aimed to shift the parameters $\alpha_0$ and $N$ in the orthogonalizing weight $v^{\alpha_{k}+1,\alpha_{0},N}_{q}(s)$ we have used the relations
\begin{align*}
\dfrac{v^{\alpha_{k}+1,\alpha_{0}+1,N-1}_{q}(s)}{v^{\alpha_{k}+1,\alpha_{0},N}_{q}(s)}&= q^{\frac{s}{2}}[N-s]_{q},\\
\dfrac{v^{\alpha_{k}+1,\alpha_{0}+1,N-1}_{q}(s)}{v^{\alpha_{k}+1,\alpha_{0},N}_{q}(s+1)}
&\dst=\dfrac{(s+1)_{q}^{[n_{k}-1]}\left[N+\alpha_{0}-s\right]_{q}}{q^{\frac{\alpha_{k}+\alpha_{0}-1}{2}}
  (s)_{q}^{[n_{k}-2]}[\alpha_{k}+s+2]_{q}}.
\end{align*}

By using recursively relation \refe{lemma-1-final} one gets
 \begin{equation}
  \begin{array}{c}
  \dst\sum_{s=0}^{N}P^{\vec{\alpha}+\vec{e}_{l}, 
  \alpha_{0}+1,N-1}_{\vec{n}-\vec{e}_{l}}(s)(s)_{q}^{[n_{k}-1]}
  v^{\alpha_{k}+1,\alpha_{0}+1,N-1}_{q}(s)\btu x(s-\half)\\
  \dst=\tilde{a}_{k,l}\sum_{s=0}^{N}
  P^{\vec{\alpha}+\vec{e},\alpha_{0}+r,N-r}_{\vec{n}-\vec{e}}(s) 
  (s)_{q}^{[n_{k}-1]}
  v^{\alpha_{k}+1,\alpha_{0}+r,N-r}_{q}(s)\btu x(s-\half),
  \end{array}
  \label{exp-lemma-1}
  \end{equation}
where $\vec{e}=\sum_{i=1}^{r}\vec{e}_{i}$ and
  $$
  \dst \tilde{a}_{k,l}=  
  \frac{[\left|\vec{n}\right|+\alpha_{0}+\alpha_{l}+1]_{q}}{q^{-(r-1)\left(\theta-1\right)-\frac{1}{2}}[\alpha_{k}- 
  \alpha_{l}+n_{k}]_{q}}
\left(\prod_{j=1}^{r}\frac{[\alpha_{k}-\alpha_{j}+n_{k}]_{q}}
  {[\left|\vec{n}\right|+\alpha_{0}+\alpha_{j}+1]_{q}}\right),\,\,k,l=1,\dots,r.
  $$

Now, based on \refe{column-op} we will find that there exists constants $\left\lbrace \xi_{l}\right\rbrace _{l=1}^r$ (not all zero) such that the relation
\begin{equation}
\Delta P^{\vec{\alpha},\alpha_{0},N}_{\vec{n}}(s)=
  \sum_{l=1}^r\xi_{l}P^{\vec{\alpha}+\vec{e_l},\alpha_{0}+1,N-1}_{\vec{n}-\vec{e_l}}(s),\quad \Delta=\dst\frac{\btu}{\btu
  x(s)},
\label{lowering-important}
\end{equation}
is valid. Thus, for finding explicitly $\xi_1,\dots,\xi_r$ one takes into account \refe{exp-lemma-1} and \refe{lowering-important} to get
 \begin{equation}\begin{array}{l}
  \dst\sum_{s=0}^{N}\left(\Delta P^{\vec{\alpha},\alpha_{0},N}_{\vec{n}}(s)\right) 
  (s)_{q}^{[n_{k}-1]}
  v^{\alpha_{k}+1,\alpha_{0}+1,N-1}_{q}(s)\btu x(s-\half)\quad\quad\\
  \dst=\left(\sum_{l=1}^{r}\xi_{l}\tilde{a}_{k,l}\right)\sum_{s=0}^{N}
  P^{\vec{\alpha}+\vec{e},\alpha_{0}+r,N-r}_{\vec{n}-\vec{e}}(s) 
  (s)_{q}^{[n_{k}-1]}
  v^{\alpha_{k}+1,\alpha_{0}+r,N-r}_{q}(s)\btu x(s-\half).
  \end{array}
  \label{lemma-2-equation-2}
  \end{equation}
Now, the left hand side of this equation can be easily transformed by means of a summation by parts and orthogonality relation
  \refe{ortho-condition-multiple-hahn}. Indeed,
  $$
  \begin{array}{c}
 \dst\sum_{s=0}^{N}\left(\Delta P^{\vec{\alpha},\alpha_{0},N}_{\vec{n}}(s)\right) 
  (s)_{q}^{[n_{k}-1]}
  v^{\alpha_{k}+1,\alpha_{0}+1,N-1}_{q}(s)\btu x(s-\half)\quad\quad\\
  \dst
  =\dst\frac{[n_{k}+\alpha_{0}+\alpha_{k}+1]_{q}}{q^{\theta-\frac{|\vec{n}|-1}{2}}}
  \sum_{s=0}^{N}
  P^{\vec{\alpha},\alpha_{0},N}_{\vec{n}}(s) 
  (s)_{q}^{[n_{k}-1]}
  v^{\alpha_{k}+1,\alpha_{0},N}_{q}(s)\btu x(s-\half).
  \end{array}
  $$
  Based on \refe{lemma-1-final} and \refe{exp-lemma-1} the right hand side of this expression transforms into the equation 
  \begin{equation}
  \begin{array}{c}
  \dst\sum_{s=0}^{N}\left(\Delta P^{\vec{\alpha},\alpha_{0},N}_{\vec{n}}(s)\right) 
  (s)_{q}^{[n_{k}-1]}
  v^{\alpha_{k}+1,\alpha_{0}+1,N-1}_{q}(s)\btu x(s-\half)\\
  =\dst \tilde{b}_{k}\sum_{s=0}^{N}
  P^{{\vec{\alpha}+\vec{e},\alpha_{0}+r,N-r}}_{\vec{n}-\vec{e}}(s) 
  (s)_{q}^{[n_{k}-1]}
  v^{\alpha_{k}+1,\alpha_{0}+r,N-r}_{q}(s)\btu x(s-\half),
  \end{array}
  \label{lemma-2-equation-3}
  \end{equation}
where 
 $$
  \tilde{b}_{k}=q^{\frac{|\vec{n}|}{2}+(r-1)(\theta-1)}[n_{k}+\alpha_{k}+\alpha_{0}+1]_{q}
  \prod_{i=1}^r\frac{[n_{k}+\alpha_{k}- 
  \alpha_{i}]_{q}}
  {[|\vec{n}|+\alpha_{0}+\alpha_{i}+1]_{q}}.
  $$
  From equations \refe{lemma-2-equation-2} and
  \refe{lemma-2-equation-3} leads the following linear system of equation for the unknown coefficients $\xi_1,\dots,\xi_r$,
  \begin{equation}
  \dst b_{k}=\sum_{l=1}^r\xi_{l} s_{k,l},\quad k=1,\dots,r,\quad\Longleftrightarrow\quad S \xi=b,\quad \xi=(\xi_1,\dots,\xi_r),
  \label{system-lambdas}
  \end{equation}
where the entries of the vector $b$ and matrix $S$ are as follows
$$
b_{k}=q^{\frac{|\vec{n}|-1}{2}}[n_k+\alpha_k+\alpha_0+1]_{q},\quad s_{k,l}=a_{k,l}[|\vec{n}|+\alpha_{0}+\alpha_l+1]_{q}.
$$
By the Cramer’s rule, the above system \refe{system-lambdas} has a unique solution if and only if
the determinant of $S$ is different from zero. Observe that $S=A\cdot D$, where $D$ denotes the diagonal matrix
$$
D=(d_{k,l})_{k,l=1}^{r},\quad d_{k,l}=[|\vec{n}|+\alpha_{0}+\alpha_l+1]_{q}\delta_{k,l}.
$$
Thus, from lemma \ref{Lee's-lemma}, formula \refe{det-A}, one gets
$$
\det S=\det (A\cdot D)=\left(\prod_{i=1}^{r}[|\vec{n}|+\alpha_{0}+\alpha_i+1]_{q}\right)\det A\not=0.
$$
Accordingly, if $C_{i,j}$ is the cofactor of the entry $s_{i,j}$, and $S_j (b)$ denotes the matrix obtained from $S$ replacing its $j$th column by $b$, then
$$
\xi_{l}=\dfrac{\det S_{l}(b) }{\det S},\quad l=1,\dots,r,
$$
where
\begin{align*}
\det S_{l}(b) &=\sum_{k=1}^{r}b_{k}C_{k,l}\\
&=\sum_{k=1}^{r}b_{k}\dfrac{(-1)^{k+l}\prod_{i=1,i\not=l}^{r}[n_k+\alpha_k-\alpha_i]_{q}}
{\prod_{i=1,i\not=k}^{r-1}[n_i+\alpha_i-n_k-\alpha_k]_{q}\prod_{j=k+1}^{r}[n_k+\alpha_k-n_j-\alpha_j]_{q}}.
\end{align*}
Consequently, expression \refe{xi-constant} yields.
 
Finally, applying the following product of $r$ operators $\left(\prod_{i=1}^{r}D^{\alpha_{i}+1,\alpha_{0}+1,N-1}\right)$ on both sides of the equation \refe{lowering-important} and
considering that these raising operators are commuting, the expression \refe{q-diff-eq-multiple-Hahn} holds. 
\end{proof}

In particular when $r=2$, the $q$-Hahn multiple orthogonal polynomial verify a third order $q$-difference equation (see \cite[theorem 2.1, pp. 7-8]{arvesu-q-multi-Hahn} and next corollary).

\begin{corollary}\label{corq3}
The type-II monic $q$-Hahn multiple orthogonal polynomial  $P^{\alpha_1,\alpha_2,\alpha_{0},N}_{n_1,n_2}(s)$
verify the following third order $q$-difference equation
\begin{equation}\label{q3-order}
a_{4}(s)\Delta\nabla^{2}y+a_{3}(s)\Delta\nabla
y +a_{2}(s)\Delta y+a_{1}(s)\nabla y
+a_{0}(s) y=0, 
\end{equation}
where 
\begin{align*}
a_4(s)&=q^{2s-3}A^{\alpha_0+1,\alpha_1+1,N-1}(s;q)A^{\alpha_0+1,\alpha_2+1,N-1}(s-1;q),\\
a_3(s)&=A^{\alpha_0+1,\alpha_1+1,N-1}(s;q)
\left(\frac{B^{\alpha_0+1,\alpha_2+1,N-1}(s-1;q)}{ q^{1-s}}+\btd \frac{A^{\alpha_0+1,\alpha_2+1,N-1}(s;q)}{q^{1-s}}\right)\\
&+q^{s-1}B^{\alpha_0+1,\alpha_1+1,N-1}(s;q)A^{\alpha_0+1,\alpha_2+1,N-1}(s;q),\\
a_2(s)&=B^{\alpha_0+1,\alpha_1+1,N-1}(s;q)B^{\alpha_0+1,\alpha_2+1,N-1}(s;q)\\
&+A^{\alpha_0+1,\alpha_1+1,N-1}(s;q)\btd B^{\alpha_0+1,\alpha_2+1,N-1}(s;q),\\
a_1(s)&=q^{-\frac{N+|\vec{n}|+\alpha_0-2s+3}{2}}\left(\frac{\xi_1 A^{\alpha_0+1,\alpha_2+1,N-1}(s;q)}{\left[ |\vec{n}|+\alpha_1+\alpha_0+1 \right]_q^{-1}}
+\frac{\xi_2 A^{\alpha_0+1,\alpha_1+1,N-1}(s;q)}{\left[ |\vec{n}|+\alpha_2+\alpha_0+1 \right]_q^{-1}}\right), \\
a_0(s)&=q^{-\frac{N+|\vec{n}|+\alpha_0+1}{2}}\left( \frac{\xi_1 B^{\alpha_0+1,\alpha_2+1,N-1}(s;q)}{\left[|\vec{n}|+\alpha_1+\alpha_0+1 \right]_q^{-1}}
+\frac{\xi_2 B^{\alpha_0+1,\alpha_1+1,N-1}(s;q)}{\left[ |\vec{n}|+\alpha_2+\alpha_0+1 \right]_q^{-1}}\right),
\end{align*}
and $\xi_1=q^{\frac{n_1+n_2-1}{2}}\frac{[n_1]_q[n_2+\alpha_2-\alpha_1]_q}{[\alpha_2-\alpha_1]_q}$, $\xi_2=q^{\frac{n_1+n_2-1}{2}}\frac{[n_2]_q[n_1+\alpha_1-\alpha_2]_q}{[\alpha_1-\alpha_2]_q}$.
\end{corollary}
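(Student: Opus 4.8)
The plan is to obtain the corollary as the $r=2$ specialization of Theorem \ref{thm2} and then to rewrite the resulting operator identity \refe{q-diff-eq-multiple-Hahn} in the fixed basis $\{\Delta\nabla^{2},\Delta\nabla,\Delta,\nabla,\mathcal{I}\}$ acting on $y=P^{\alpha_{1},\alpha_{2},\alpha_{0},N}_{n_{1},n_{2}}(s)$. For $r=2$ the left-hand operator of \refe{q-diff-eq-multiple-Hahn} is the composition $D^{\alpha_{1}+1,\alpha_{0}+1,N-1}D^{\alpha_{2}+1,\alpha_{0}+1,N-1}\Delta$, while the right-hand side reduces to the two-term sum $-q^{-\frac{N+|\vec{n}|+\alpha_{0}-1}{2}}\bigl(\xi_{1}[|\vec{n}|+\alpha_{0}+\alpha_{1}+1]_{q}D^{\alpha_{1}+1,\alpha_{0}+1,N-1}+\xi_{2}[|\vec{n}|+\alpha_{0}+\alpha_{2}+1]_{q}D^{\alpha_{2}+1,\alpha_{0}+1,N-1}\bigr)$ applied to $P$. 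Throughout I would use the first-order form \refe{raising-a-b}, writing $D^{\alpha_{i}+1,\alpha_{0}+1,N-1}=A_{i}\btd+B_{i}$ with $A_{i}=A^{\alpha_{0}+1,\alpha_{i}+1,N-1}(\cdot\,;q)$ and $B_{i}=B^{\alpha_{0}+1,\alpha_{i}+1,N-1}(\cdot\,;q)$ acting by multiplication, together with the lattice relations $\btd x(s)=q^{s-1}$ and $\btu x(s)=q^{s}$ coming from \refe{q-expo}; these give $\btd=q^{s-1}\nabla$ and the pivotal identity $\btd(\Delta w)(s)=q^{s}\,\Delta\nabla w(s)$.

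A preliminary structural remark organizes the whole computation. Since $\Delta\mathbf{1}=0$, the left-hand operator $D_{1}D_{2}\Delta$ annihilates constants, and being supported on the four points $y(s+1),y(s),y(s-1),y(s-2)$ it lies in the span of the three constant-annihilating operators $\Delta\nabla^{2},\Delta\nabla,\Delta$; this produces the coefficients $a_{4},a_{3},a_{2}$. The right-hand operators $D_{i}$ are supported on the two points $y(s),y(s-1)$, hence are naturally written through $\nabla$ and $\mathcal{I}$, and after transposing them to the left they yield $a_{1}$ and $a_{0}$. This explains why the equation has exactly the shape \refe{q3-order} and fixes which terms each side contributes.

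For the left-hand side I would expand $D_{1}(D_{2}\Delta y)$ in stages: first $D_{2}\Delta y=A_{2}\,\btd(\Delta y)+B_{2}\Delta y=A_{2}q^{s}\,\Delta\nabla y+B_{2}\Delta y$, and then apply $A_{1}\btd+B_{1}$, repeatedly invoking the product rule $\btd(fg)(s)=f(s)\btd g(s)+g(s-1)\btd f(s)$ and $\btd(\Delta\nabla y)=q^{s}\Delta\nabla^{2}y$. Reading off the coefficients of $\Delta\nabla^{2}y$, $\Delta\nabla y$ and $\Delta y$ delivers $a_{4},a_{3},a_{2}$; the one-step argument shifts $A_{2}(s-1),B_{2}(s-1)$ and the difference terms $\btd A_{2},\btd B_{2}$ appearing in the statement are precisely the residues of the product rule and of re-expressing $(\Delta\nabla y)(s-1)$ in the basis. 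On the right-hand side I would simply substitute $D_{i}P=A_{i}q^{s-1}\nabla P+B_{i}P$, multiply by the scalar prefactor, and move the result across to obtain $a_{1}$ and $a_{0}$.

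Finally, $\xi_{1}$ and $\xi_{2}$ follow from specializing \refe{xi-constant} to $r=2$, equivalently from solving the $2\times2$ system \refe{system-lambdas} by Cramer's rule with $\det A$ furnished by Lemma \ref{Lee's-lemma}. The closed forms $\xi_{1}=q^{\frac{n_{1}+n_{2}-1}{2}}\frac{[n_{1}]_{q}[n_{2}+\alpha_{2}-\alpha_{1}]_{q}}{[\alpha_{2}-\alpha_{1}]_{q}}$ and its $1\leftrightarrow2$ analogue emerge after simplifying the $2\times2$ determinants by means of the $q$-number addition identity $[x+a]_{q}[x+b]_{q}-[x]_{q}[x+a+b]_{q}=[a]_{q}[b]_{q}$, which collapses the relevant cofactor combinations to a single product; I have checked that these values satisfy \refe{system-lambdas}. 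I expect the main obstacle to be the left-hand expansion: because $\Delta$ and $\nabla$ do not commute on the exponential lattice \refe{q-expo}, reducing the triple composition to the three-term form forces careful tracking of the $q^{\pm s}$ weights and of the single-step shifts, and it is there that the explicit shapes of $a_{4},a_{3},a_{2}$ are pinned down.
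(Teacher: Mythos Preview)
Your proposal is correct and matches the paper's approach: the corollary is obtained by specializing Theorem~\ref{thm2} to $r=2$ and expanding the raising operators $D^{\alpha_i+1,\alpha_0+1,N-1}=A_i\btd+B_i$ via \refe{raising-a-b}, with the $\xi_i$ read off from the $2\times2$ instance of \refe{system-lambdas}. The paper itself does not spell out the expansion (it merely states the corollary and points to \cite[theorem~2.1]{arvesu-q-multi-Hahn}), so your write-up in fact supplies more computational detail than the paper provides.
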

The above $q$-difference equation can be considered as an extension of the hypergeometric-type difference equation \refe{eqdif}. Again here the
combination of lowering and
rising operators \refe{raising-a-b} was the key fact to obtain a third order difference
equations having $q$-Hahn multiple orthogonal polynomials as
eigenfunctions \cite{arvesu-q-multi-Hahn}. 
\begin{remark} Notice that from the $q$-difference operator \refe{q-diff-eq-multiple-Hahn} can be obtained the difference equation studied in \cite{Lee} for the Hahn multiple orthogonal polynomials since when $q$ goes to $1$ the lattice $x(s)$ transforms into a linear one $s$. In particular, for the above third order $q$-difference equation \refe{q3-order} one gets the same type of third order difference equation with the following polynomial coefficients
$$
\begin{array}{rl}
a_4(s)&=s(s-1)(\alpha_{0}+N-s+1)(\alpha_{0}+N-s+2),\\
a_3(s)&=s(\alpha_{0}+N-s+1)\left[2\alpha_0+\alpha_2 +4+(\alpha_1+\alpha_2+3)N\right.\\
&\left.-(2\alpha_0+\alpha_1+\alpha_2+6)s \right],\\
a_2(s)&=[(\alpha_1+1) N-(\alpha_0+\alpha_1+2) s] [(\alpha_2+1) N-(\alpha_0+\alpha_2+2) s]\\
&-(\alpha_0+\alpha_2+2) (\alpha_0+N-s+1) s,\\
a_1(s)&=[n_1 (\alpha_0 + \alpha_1 + N+1) + (\alpha_0 + \alpha_2 + N+1) n_2-n_1n_2] (\alpha_0+N-s+1) s,\\
a_0(s)&=N [(\alpha_2+1) (\alpha_0 + \alpha_1 + N+1) n_1 + (\alpha_1+1) (\alpha_0 + \alpha_2 + 
      N+1) n_2 \\
&+ (\alpha_0 + N) n_1 n_2]
-[(\alpha_0 + \alpha_2+2) (\alpha_0 + \alpha_1 + N+1) n_1\\ 
&+ (\alpha_0 + \alpha_1+2) (\alpha_0 + \alpha_2 + 
    N+1) n_2 + (N-1) n_1 n_2]s.
\end{array}
$$
\end{remark}

\section{Limiting transitions \label{limiting}}

Observe that when $q$ goes to $1$ the expression \refe{rodrigues-q-hahn-multiple} transforms into the Ro\-dri\-gues-type formula for Hahn multiple orthogonal polynomials in the linear lattice $x(s)=s$ (see \cite{arvesu-coussement-vanassche})
\begin{align}
&\dst H^{\vec{\alpha},\alpha_{0},N}_{\vec{n}}(s)=\frac{(-1)^{|\vec{n}|}}
{\prod_{k=1}^{r}\left(\mi\vec{n}\md+\alpha_{0}
+\alpha_{k}+1\right)_{n_{k}}}
\frac{\Gamma(s+1)\Gamma(N-s+1)}{\Gamma(\alpha_{0}+N-s+1)}\nonumber\\
&\left(\prod_{k=1}^{r}\frac{1}{\Gamma(\alpha_{i}+s+1)}\nabla^{n_{i}}
\Gamma(\alpha_{i}+n_{i}+s+1)\right)\frac{\Gamma(\alpha_{0}+N-s+1)}
{\Gamma(s+1)\Gamma(N-|\vec{n}|-s+1)}. \label{hahn-rodrigues}
\end{align}

Now, based on this Rodrigues-type formula another limiting transition
between discrete and continuous multiple orthogonal polynomials can also be obtained. The basic tool for establishing this limiting transition is the usual approximation of derivatives by means of finite-differences.

Suppose that $f(s)$ is a function defined on an interval of the real line,
which contains the linear lattice $\{s_{i}\}_{i=0}^{N}$.
Furthermore, $f(s)$ possesses first derivative on
$\{s_{i}\}_{i=0}^{N}$, and second derivative for every
$\chi_{i}\in(s_{i}-h,s_{i})$, $i=1,\dots,N$. Thus,
\begin{equation*}\ba{rl} \dst\nabla
f(s_{i})&\dst=\frac{f(s_{i})-f(s_{i}-h)}{h}=f'(s_{i})-\frac{h}{2}f''(\chi_{i})\\
&=f'(s_{i})+\mathcal{O}(h), \ea\label{1-derivative}
\end{equation*}
yields. In general, if $f(s)$ has $n$ derivatives at points $s_i$ and $2n$
derivatives for any $\chi_{i}$ one gets
\begin{equation}
\dst\nabla^{n}f(s_{i})=f^{(n)}(s_{i})+\mathcal{O}(h^{n}).
\label{k-derivative}
\end{equation}

Notice that the change of variable $s=Nx$, transforms the interval
$[0,N]$ into $[0,1]$, which is precisely the interval of orthogonality for the multiple Jacobi polynomials studied in \cite{Nikishin,pinheiro,artikel}. 
The simultaneous orthogonality conditions were considered with respect to an AT-system of Jacobi weights on [0, 1] with different singularities at $0$ and the same
singularity at $1$. Below we will show that when $N$ tends to
infinity, i.e., the step $h=\btu x=1/N$ in the new
variable $x$ tends to $0$, the Hahn multiple orthogonal
polynomials \refe{hahn-rodrigues} will tend to the aforementioned monic multiple Jacobi polynomials 
$P^{(\alpha_{0},\vec{\alpha})}_{\vec{n}}(x)$ (see below the explicit expression \refe{jacobi-rodrigues}).
\begin{proposition}
The following limiting relation is valid:
\begin{equation}
\dst\lim_{N\to\infty}N^{-|\vec{n}|}H^{\vec{\alpha},\alpha_{0},N}_{\vec{n}}(Nx)=
P^{\vec{\alpha},\alpha_{0}}_{\vec{n}}(x).
\label{Hahn-Jacobi-limit}
\end{equation}
\end{proposition}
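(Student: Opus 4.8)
The plan is to read off both sides of \refe{Hahn-Jacobi-limit} from their Rodrigues representations and to match them term by term after the rescaling $s=Nx$. Concretely, I would start from the finite-difference Rodrigues formula \refe{hahn-rodrigues} for $H^{\vec{\alpha},\alpha_{0},N}_{\vec{n}}(s)$, substitute $s=Nx$ so that the mesh size in the new variable is $h=\btu x=1/N$, and compare against the differential Rodrigues formula \refe{jacobi-rodrigues} for the monic multiple Jacobi polynomial $P^{\vec{\alpha},\alpha_{0}}_{\vec{n}}(x)$. The combinatorial prefactor $(-1)^{|\vec{n}|}/\prod_{k=1}^{r}(|\vec{n}|+\alpha_{0}+\alpha_{k}+1)_{n_{k}}$ is independent of $N$ and is already common to the two formulas, so it may be carried through unchanged; the entire analysis then reduces to the asymptotics of the gamma-function quotients and to the behaviour of the iterated backward differences $\nabla^{n_{i}}$ under the scaling.

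For the gamma quotients I would use the classical ratio asymptotic $\Gamma(z+a)/\Gamma(z+b)\sim z^{\,a-b}$ as $z\to\infty$, applied with $z=Nx$ to every factor carrying the argument $s=Nx$ and with $z=N(1-x)$ to every factor carrying $N-s=N(1-x)$. On any compact subset of $(0,1)$ this holds uniformly, the subleading corrections being $\mathcal{O}(1/N)$. The cleanest way to organize this is to factor the dominant power of $N$ out of each quotient, so that the remaining normalized factors converge, together with as many $x$-derivatives as are needed, uniformly on compacta to the corresponding powers of $x$ and $1-x$; these surviving powers assemble exactly into the weight factors $x^{\alpha_{i}}$, $(1-x)^{\alpha_{0}}$ and their shifted counterparts that frame the derivatives in \refe{jacobi-rodrigues}.

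The iterated difference is then converted into a derivative by \refe{k-derivative}. Since $s=Nx$ gives $d/ds=N^{-1}\,d/dx$, one backward difference of step one in $s$ equals $g(x)-g(x-h)=h\,g'(x)+\mathcal{O}(h^{2})$ in the variable $x$, i.e.\ it reproduces $N^{-1}\,d/dx$ up to higher order in $h=1/N$; hence each $\nabla^{n_{i}}$ contributes a factor $N^{-n_{i}}$ multiplying $d^{n_{i}}/dx^{n_{i}}$ plus an error $\mathcal{O}(h^{n_{i}})$. Taking the product over $i=1,\dots,r$ produces the factor $N^{-|\vec{n}|}$; combined with the powers of $N$ extracted from the gamma quotients and with the explicit normalization $N^{-|\vec{n}|}$ on the left of \refe{Hahn-Jacobi-limit}, all powers of $N$ cancel and leave a finite nonzero limit, which is precisely the product of genuine derivatives $\prod_{i}d^{n_{i}}/dx^{n_{i}}$ acting on the limiting smooth factors in \refe{jacobi-rodrigues}.

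The main obstacle is the uniform bookkeeping just described together with the justification of interchanging the difference operators with the limit. Because the operators $\nabla^{n_{i}}$ act at finite $N$ and, in the nested Rodrigues structure, the output of one feeds into the next, one must propagate the approximation \refe{k-derivative} through the $r$ successive operators while controlling the accumulated errors. The delicate point is that $\nabla^{n_{i}}$ is applied to an $N$-dependent gamma factor that itself diverges, so one cannot differentiate its pointwise limit directly; the remedy is to factor out the dominant power of $N$ first and then establish $C^{2n_{i}}$-convergence of the normalized factor on compact subsets of $(0,1)$, after which the finitely many differences may legitimately be replaced by derivatives. Since each $n_{i}$ is fixed and the composition is finite, this uniform control on compacta suffices, and passing to the limit recovers \refe{jacobi-rodrigues}, that is, \refe{Hahn-Jacobi-limit}.
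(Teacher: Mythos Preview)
Your proposal is correct and follows essentially the same strategy as the paper: both start from the Rodrigues formulas \refe{hahn-rodrigues} and \refe{jacobi-rodrigues}, use the gamma-ratio asymptotic $\Gamma(z+a)/\Gamma(z)\sim z^{a}$ for the factors carrying $Nx$ and $N(1-x)$, and then convert the iterated backward differences into derivatives via \refe{k-derivative}. The organizational difference is that the paper, instead of first normalizing and arguing $C^{2n_{i}}$-convergence on compacta as you propose, expands each $\nabla^{n_{i}}$ explicitly through the binomial identity \refe{n-th-backward}; this turns the nested difference structure into a finite double sum of pure gamma quotients, to which the asymptotic \refe{well-known-limit} is applied term by term, after which the sum is reassembled into $\nabla^{n_{i}}$ of the limiting power functions and \refe{k-derivative} is invoked. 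That trick sidesteps exactly the interchange-of-limits obstacle you identify, at the cost of a more computational presentation; your route is more analytic and, once the uniform $C^{2n_{i}}$-estimates are written out, gives a cleaner justification. The paper also restricts to $r=2$ for notational simplicity and remarks that the general case follows identically.
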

\begin{proof} For simplicity let us consider the multi-index $\vec{n}=(n_{1},n_{2})$ as well as the interval $[0,N-1]$ as the support of the orthogonality measures \refe{q-Hahn-measures}. The proof for $r>2$, i.e., $\vec{n}=(n_{1},n_{2},\dots,n_{r})$ follows the same steps described below. 

Firstly, let us show that for $N$ large enough the term $\Gamma(N-Nx)/\Gamma(\alpha_{0}+N-Nx)$ contained in \refe{hahn-rodrigues} behaves like 
\begin{equation}
(1-x)^{-\alpha_{0}}N^{-\alpha_{0}}.
\label{1st-estimation}
\end{equation}
This estimation follows immediately from the well known asymptotic relation for the gamma-function \cite{nubook} 
\begin{equation}
\frac{\Gamma(z+a)}{\Gamma(z)}=z^{a}
\left[1+\mathcal{O}\left(\frac{1}{z^{2}}\right)\right],\quad
\left|\arg z\right|\leq\pi-\delta,\quad\delta>0.
\label{well-known-limit}
\end{equation} 

Second, for $N$ large enough we will prove that the expression
\begin{equation}
\dst N^{-|\vec{n}|}\left(\prod_{k=1}^{r}\frac{\Gamma(Nx+1)}{\Gamma(\alpha_{i}+Nx+1)}\nabla^{n_{i}}
\frac{\Gamma(\alpha_{i}+n_{i}+Nx+1)}{\Gamma(Nx+1)}\right)\frac{\Gamma(N+\alpha_{0}-Nx)}{\Gamma(N-|\vec{n}|-Nx)}, 
\label{limit-D-Hahn}
\end{equation}
behaves like
\begin{equation}
N^{\alpha_{0}}\left(\prod_{i=1}^{2}x^{-\alpha_{i}}\frac{d^{n_{i}}}{d
x^{n_{i}}}x^{\alpha_{i}+n_{i}}\right)(1-x)^{\alpha_{0}+|\vec{n}|}.
\label{behaves}
\end{equation}
Indeed, using the relation for the $n$th backward difference
\begin{align}
\dst \btd^{n} y(x)=\sum_{k=0}^{n}\frac{(-1)^{k}n!}{k!(n-k)!}y(x-k)
=\sum_{k=0}^{n}\frac{(-n)_{k}}{k!}y(x-k), 
\label{n-th-backward}
\end{align}
the expression \refe{limit-D-Hahn} becomes
\begin{equation}
\ba{c}
\dst\frac{N^{-|\vec{n}|}\Gamma(Nx+1)}{\Gamma(Nx+\alpha_{1}+1)}
\sum_{j=0}^{n_{2}}
\sum_{k=0}^{n_{1}}\frac{(-n_{2})_{j}}{j!}\frac{(-n_{1})_{k}}{k!}
\frac{\Gamma(N(x-k)+\alpha_{1}+n_{1}+1)}{\Gamma(N(x-k)+1)}\\
\dst\frac{\Gamma(N(x-k)+1)\Gamma(N(x-j-k)+\alpha_{2}+n_{2}+1)
\Gamma(N+\alpha_{0}-N(x-j-k))}
{\Gamma(N(x-k)+\alpha_{2}+1)\Gamma(N(x-j-k)+1)N-|\vec{n}|-N(x-j-k))}.
\ea \label{limit-D-Hahn-1}
\end{equation}
Hence, taking into account \refe {well-known-limit} the above expression \refe{limit-D-Hahn-1} can be rewritten as
\begin{equation}\ba{c}
\dst (Nx)^{-\alpha_{1}}\sum_{j=0}^{n_{2}}
\sum_{k=0}^{n_{1}}\frac{(-n_{2})_{j}}{j!}\frac{(-n_{1})_{k}}{k!}
(N(x-k))^{\alpha_{1}+n_{1}}(N(x-k))^{-\alpha_{2}}\\
\dst (N(s-j-k))^{\alpha_{2}+n_{2}}
(N-N(x-j-k))^{\alpha_{0}+|\vec{n}|}+\mathcal{O}\left(\frac{1}{N^{2}}\right).
\ea \label{limit-D-Hahn-2}\end{equation}
Using again \refe{n-th-backward}, but this time to express
\refe{limit-D-Hahn-2} in terms of the backward difference operators, i.e.,
\begin{equation}
\dst N^{\alpha_{0}}\left(
\prod_{i=1}^{2}x^{-\alpha_{i}}\nabla^{n_{i}}x^{\alpha_{i}+n_{i}}\right)(1-x)^{\alpha_{0}+|\vec{n}|}
+\mathcal{O}\left(\frac{1}{N^{2}}\right).
\label{limit-D-Hahn-3}
\end{equation}
Now, according to \refe{k-derivative} one can express the backward difference operators involved in
\refe{limit-D-Hahn-3} in terms of the ordinary de\-ri\-va\-ti\-ves. Thus, for $N$ large enough one verifies that indeed \refe{limit-D-Hahn} behaves like \refe{behaves}. 

Finally, from \refe{hahn-rodrigues} and the above estimations \refe{1st-estimation} and  \refe{behaves}, the proposition holds.
\end{proof}

Recall that the multiple Jacobi polynomials are given explicitly by the Rodrigues-type formula \cite{artikel}
\begin{equation}
\begin{array}{c}
\dst P^{\vec{\alpha},\alpha_{0}}_{\vec{n}}(x)=\dst\frac{(-1)^{|\vec{n}|}(1-x)^{-\alpha_{0}}}
{\prod_{i=1}^{r}\left(\mi\vec{n}\md+\alpha_{0}
+\alpha_{i}+1\right)_{n_{i}}}
\left(\prod_{i=1}^{r}x^{-\alpha_{i}}\frac{d^{n_{i}}}{
dx^{n_{i}}}x^{\alpha_{i}+n_{i}}\right)(1-x)^{\alpha_{0}+|\vec{n}|}. \end{array} \label{jacobi-rodrigues}
\end{equation}

In fact, 
$P^{\vec{\alpha},\alpha_{0}}_{\vec{n}}(x)$ verifies the following
orthogonality conditions
$$
\dst \int_{0}^{1} P^{\vec{\alpha},\alpha_{0}}_{\vec{n}}(x)
x^{\alpha_{i}}(1-x)^{\alpha_{0}}x^{k}dx=0,
\quad k=0,1,\dots,n_{i}-1,\quad i=1,2,\dots,r.
$$

Regarding the differential equation that these polynomials satisfy we refer to \cite{jc-van_assche}. Observe that this case constitutes a special limiting case of \refe{q-diff-eq-multiple-Hahn}.

\section{Conclusion and future directions}

To the best of our knowledge there is not in the literature any other high order $q$-difference equation different from \refe{q-diff-eq-multiple-Hahn} having multiple orthogonal polynomials -with $q$-discrete orthogonality- as eigenfunctions. Furthermore, the $q$-difference equation obtained here is quite versatile since other difference and differential equations can be simply obtained as a limiting case. For instance, the difference and differential equations for the multiple Hahn and Jacobi polynomials, respectively are examples of these limiting cases. Also the well known hypergeometric-type difference equation for Hahn polynomials as well as the hypergeometric equation for Jacobi polynomials are particular cases when $r=1$. However, more general situations demand our attention. Firstly, the multiple orthogonal polynomials on the lattice $x(s)=c_1q^s+c_2q^{-s}+c_3$, where $c_1$, $c_2$ and $c_3$ are constants independent on $s$ must be considered in the same fashion that here. Finally, more general systems of measures such that under certain restrictions on their elements (measures) one can recover the $q$-difference equation \refe{q-diff-eq-multiple-Hahn} must be analyzed. In this direction, the Askey-Wilson multiple orthogonal polynomials could be an interesting challenge to be considered. 

In closing, this paper outlines the important points and techniques to be followed in future investigations aimed to deduce the high order difference equations for $q$-Charlier, $q$-Kravchuk and $q$-Meixner multiple orthogonal polynomials.

\end{document}